\title{On selfinjective algebras of stable dimension zero}
\author[M.~Yoshiwaki]{Michio Yoshiwaki}
\address{Department~of~Mathematics~and~Physics, 
Graduate~School~of~Science,
Osaka~City~University,
3-3-138 Sugimoto, Sumiyoshi-ku, Osaka~558-8585, Japan}
\email{yosiwaki@sci.osaka-cu.ac.jp}
\subjclass[2000]{Primary~16G60, Secondary~18E30}
\keywords{Representation-finite algebra; selfinjective algebra; stable dimension}
\newtheorem{conjecture}{Conjecture}
\newtheorem*{mthm}{Main Theorem}
\newtheorem*{cor}{Corollary }
\newtheorem{thmenum}{Theorem}[section]
\newtheorem{prpenum}[thmenum]{Proposition}
\newtheorem{lemenum}[thmenum]{Lemma}
\newtheorem{corenum}[thmenum]{Corollary}
\theoremstyle{definition}
\newtheorem{dfnenum}[thmenum]{Definition}
\theoremstyle{remark}
\newtheorem{rmkenum}[thmenum]{Remark}
\newcommand{\umod}{\operatorname{\underline{mod}}\nolimits}
\def\LL{\operatorname{LL}}
\def\d{\operatorname{d}}
\def\dim{\operatorname{dim}}
\def\stab{\operatorname{stab}}
\def\rep{\operatorname{rep}}
\def\End{\operatorname{End}}
\def\add{\operatorname{add}}
\def\mod{\operatorname{mod}}
\def\Hom{\operatorname{Hom}}
\def\Tr{\operatorname{Tr}}
\def\gl{\operatorname{gl}}
\def\rad{\operatorname{rad}}
\def\Hom{\operatorname{Hom}}
\def\blank{\operatorname{-}}
\def\P{\operatorname{P}}
\def\op{^{\text{\rm op}}}
\def\iso{\cong}
\def\niso{\not\cong}
\def\ra{\rightarrow}
\def\Integer{{\mathbb Z}}
\def\Tm{{\mathcal T}}
\def\Im{{\mathcal I}}
\def\Jm{{\mathcal J}}
\begin{document}
\begin{abstract}
Let $A$ be a selfinjective algebra over an algebraically closed field. 
We study the stable dimension of $A$, which is the dimension of the stable module category of $A$ in the sense of Rouquier. 
Then we prove that 
$A$ is representation-finite 
if the stable dimension of $A$ is $0$. 
\end{abstract}

\maketitle

\section{Introduction}
R. Rouquier introduced a notion of dimension of a triangulated category in \cite{R1}. 
One of his aims was to give a lower bound for Auslander's representation dimension of selfinjective algebras;
namely, he showed that for any non-semisimple selfinjective finite-dimensional algebra $A$ over a field $k$, 
the representation dimension of $A$ is greater than or equal to 
the stable dimension of $A$ plus two (see \cite{R2}). 

On the other hand, M. Auslander introduced the representation dimension in \cite{Au}, 
and hoped that the representation dimension should be a good measure of
how far a representation-infinite algebra is from being representation-finite.  
Actually, he showed that for any artin algebra $\Lambda$, $\Lambda$ is representation-finite if and only if it has representation dimension at most two.  

Thus,  
any non-semisimple representation-finite selfinjective finite-dimensional algebra over a field $k$ must have stable dimension zero, which also follows from definition immediately.
Then a natural question arises as to whether the converse should also hold.

The aim of this paper is to prove that the converse holds provided that $k$ is algebraically closed. 
Namely, our main theorem (see Theorem~\ref{Y1}) is the following.
\begin{mthm} \label{Y2}
Let $A$ be a non-semisimple selfinjective finite-dimensional connected algebra over an algebraically closed field $k$.  
If the set ${}_s \Gamma(A)_0$ of isoclasses of non-projective indecomposable $A$-modules admits only finitely many $\Omega$-orbits, 
then $A$ is representation-finite.
\end{mthm}
As a corollary, we have the desired result (see Corollary~\ref{cor1}). 
\begin{cor}
If the stable dimension of $A$ is zero, then $A$ is representation-finite. 
\end{cor}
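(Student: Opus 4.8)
The plan is to deduce the corollary from the Main Theorem; the only work is to translate the hypothesis $\dim\umod A=0$ into the condition on $\Omega$-orbits appearing there, and to reduce to the connected case.

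The key point is the following formal observation: if some $M\in\umod A$ satisfies $\langle M\rangle_1=\umod A$, then ${}_s\Gamma(A)_0$ admits only finitely many $\Omega$-orbits. Indeed, $\langle M\rangle_1$ consists of the direct summands of finite direct sums of shifts of $M$, and since $A$ is selfinjective the suspension of $\umod A$ is the cosyzygy, so the shifts of $M$ are exactly the modules $\Omega^n M$ with $n\in\Integer$. Writing $M\iso M_1\oplus\cdots\oplus M_r$ with each $M_i$ indecomposable and using that $\umod A$ is Krull--Schmidt, every non-projective indecomposable $A$-module is isomorphic to some $\Omega^n M_i$ with $1\le i\le r$ and $n\in\Integer$; hence ${}_s\Gamma(A)_0$ consists of at most $r$ distinct $\Omega$-orbits. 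In particular $\dim\umod A=0$ implies that ${}_s\Gamma(A)_0$ has only finitely many $\Omega$-orbits.

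Now suppose $\dim\umod A=0$, and decompose $A\iso A_1\times\cdots\times A_m$ into indecomposable (hence connected) selfinjective algebras, so that $\umod A\iso\umod A_1\times\cdots\times\umod A_m$ as triangulated categories and $A$ is representation-finite if and only if each $A_i$ is. Fix $M\in\umod A$ with $\langle M\rangle_1=\umod A$. Each projection functor $F_i\colon\umod A\to\umod A_i$ is an essentially surjective triangle functor, so it carries $M$ to a module $M_i:=F_i(M)$ with $\langle M_i\rangle_1=\umod A_i$; by the observation above, ${}_s\Gamma(A_i)_0$ then has only finitely many $\Omega$-orbits. If $A_i$ is semisimple it is trivially representation-finite; if $A_i$ is non-semisimple, then $A_i$ is connected and non-semisimple with ${}_s\Gamma(A_i)_0$ admitting only finitely many $\Omega$-orbits, so the Main Theorem applies and shows that $A_i$ is representation-finite. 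Hence $A$ is representation-finite.

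I do not expect any genuine obstacle beyond the Main Theorem itself. The translation between $\dim\umod A=0$ and finiteness of the number of $\Omega$-orbits is essentially formal, amounting to unwinding the definition of the dimension of a triangulated category together with Krull--Schmidt, and the passage through blocks is routine; the only minor point of care is the convention for the dimension of the zero category, which is why the semisimple case is disposed of separately. All the substance of the statement is carried by the Main Theorem.
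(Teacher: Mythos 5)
Your proposal is correct and follows essentially the same route as the paper: the ``key observation'' is exactly the paper's Lemma~3.1 (unwinding $\langle M\rangle_1=A\blank\umod$ via Krull--Schmidt to get finitely many $\Omega$-orbits in ${}_s\Gamma(A)_0$), after which the Main Theorem finishes the argument. The reduction to blocks is harmless but unnecessary here, since the paper's standing convention in Section~3 is that $A$ is connected.
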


Although this was expected to hold by some experts, it had not been proved before. 
\section{Stable dimension}
Let $\Tm$ be a triangulated category with shift functor $\operatorname{T}$. 
And let $\Im$ and $\Jm$ be full subcategories of $\Tm$. 
We denote by $\langle\Im\rangle$ the smallest full subcategory of $\Tm$ containing $\Im$ and closed
under taking shifts and direct summands of finite direct sums. 
Moreover, we denote by $\Im *\Jm$ the full subcategory of $\Tm$ consisting of objects $M\in\Tm$ 
for which there exists a triangle $I \ra M \ra J \ra \operatorname{T}I$ with $I \in\Im$ and $J \in \Jm$. 
Put $\Im \diamond \Jm = \langle \Im * \Jm\rangle$, and
inductively we define 
$$ \langle \Im \rangle_{n+1} = \left\{ \begin{array}{ll}
\langle \Im \rangle, & \text{if } n=0;\\
\langle \Im \rangle_{n} \diamond \langle \Im \rangle & \text{if } n\geq 1.
\end{array} 
\right.
$$ 
Then the {\it dimension} of $\Tm$ is
$$\dim \Tm:=\min \{ n\geq 0 \ |\ \langle M \rangle_{n+1}=\Tm\ \text{for some }M \},$$ 
or $\infty$ when there is no such an object $M$ for any $n$.

Let $A$ be a non-semisimple selfinjective finite-dimensional algebra over a field $k$. 
We denote by $A\blank\mod$ the abelian category of finite dimensional (over $k$) left $A$-modules, 
and by $A\blank\umod$ the stable module category of $A$, 
whose objects are the same as those of $A\blank\mod$ 
and morphisms from $X$ to $Y$ are given by the quotient space $\Hom_A (X,Y)/\P(X,Y)$, 
where $\P(X,Y)$ consists of those morphisms from $X$ to $Y$ which factor through a projective $A$-module. 
The syzygy functor $\Omega : A\blank\umod \ra A\blank\umod$ is a functor 
defined by the correspondence of an object $X$ to the kernel of the projective cover of $X$. 
Note that if $X$ is indecomposable, then $\Omega(X)$ is indecomposable. 
Moreover, note that the syzygy functor $\Omega$ is an equivalence, 
and the stable module category $A\blank\umod$ is a triangulated category 
with shift functor $\Omega^{-1}$ (see Happel \cite{Ha}). 
Then we define the stable dimension of selfinjective algebras. 

\begin{dfnenum}
The {\it stable dimension} of $A$ is  
$$\stab.\dim A:=\dim (A\blank\umod).$$
\end{dfnenum}

\begin{rmkenum}
Let $A$ and $B$ be non-semisimple selfinjective finite-dimensional algebras over a field $k$.
If $A$ and $B$ are {\it stably equivalent} as triangulated categories 
(that is, $A\blank\umod$ and $B\blank\umod$ are equivalent as triangulated categories), 
then $\stab.\dim A$ and $\stab.\dim B$ coincide:  
for example, if $A$ and $B$ are derived equivalent, 
then they are stably equivalent as triangulated categories (Rickard \cite{Ri}), 
and hence $\stab.\dim A$ and $\stab.\dim B$ coincide. 
Note that if $k$ is an algebraically closed field and if either $A$ or $B$ is representation-finite, 
they are derived equivalent if and only if they are stably equivalent as triangulated categories (Asashiba \cite{Asa}).  
\end{rmkenum}

The {\it representation dimension} of a non-semisimple artin algebra $\Lambda$ is 
$$\rep.\dim \Lambda:=\min\{\gl.\dim \End_{\Lambda}(M)\ |\ M:\text{a generator and a cogenerator}\}.$$
(For a semisimple artin algebra, it is defined to be $1$.)

In \cite{R2}, Rouquier has shown the following result. 
\begin{prpenum}[Rouquier \cite{R2} {\rm cf}. Auslander \cite{Au}] \label{Rou1}
Let $A$ be a non-semisimple selfinjective finite-dimensional algebra over a field. 
Then
$$\LL (A)\geq \rep.\dim A \geq \stab.\dim A +2 $$
where 
the Loewy length $\LL (A)$ is the smallest integer $r$ such that $\rad (A)^r = 0$. 
\end{prpenum}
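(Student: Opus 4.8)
The statement is really two inequalities of quite different character: the left one, $\LL(A)\ge\rep.\dim A$, is Auslander's and comes from an explicit generator--cogenerator built from the radical filtration, while the right one, $\rep.\dim A\ge\stab.\dim A+2$, is Rouquier's and comes from turning a projective resolution over an endomorphism ring into a filtration of $A\blank\umod$ by triangles. I would prove the two separately.

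For the left inequality, put $r=\LL(A)$ and $M=\bigoplus_{i=1}^{r}A/\rad^{i}A$. Since $A=A/\rad^{r}A$ is a direct summand of $M$ and $A$ is selfinjective --- so that $A$ is at once a projective generator and an injective cogenerator --- $M$ is a generator and a cogenerator, and hence $\rep.\dim A\le\gl.\dim\End_{A}(M)$. The point is then that the modules $A/\rad^{i}A$ resolve everything quickly: if $X$ has Loewy length $\ell$, then $X$ is a module over $A/\rad^{\ell}A$, hence a quotient of a free $A/\rad^{\ell}A$-module $N\in\add M$, with $\ker(N\to X)\subseteq\rad N$ of Loewy length $\ell-1$; iterating produces a finite exact $\add M$-resolution of $X$, and with some care (in particular taking the very first step to be the ordinary projective cover, which costs nothing because $A\in\add M$) one makes it $\Hom_{A}(M,-)$-exact of length at most $r-1$. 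Auslander's criterion for representation dimension, in the sharp form available because $A\in\add M$ is projective--injective, then yields $\gl.\dim\End_{A}(M)\le r$, that is, $\rep.\dim A\le\LL(A)$.

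For the right inequality, set $d=\rep.\dim A$; since $A$ is non-semisimple, $d\ge2$. By Auslander's description of representation dimension there is a generator--cogenerator $M$ with $\gl.\dim\End_{A}(M)=d$ such that every $A$-module $X$ admits an exact sequence
\[
0\to M_{d-2}\to\cdots\to M_{1}\to M_{0}\to X\to 0
\]
with each $M_{i}\in\add M$. I would break this into short exact sequences $0\to Y_{i+1}\to M_{i}\to Y_{i}\to0$ in $A\blank\mod$, where $Y_{0}=X$ and $Y_{d-2}\iso M_{d-2}$; since $A$ is selfinjective, each of these induces a triangle $Y_{i+1}\to M_{i}\to Y_{i}\to\Omega^{-1}Y_{i+1}$ in $A\blank\umod$ (Happel). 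Writing $\overline{N}$ for the image of an $A$-module $N$ in $A\blank\umod$, rotating these triangles and using that $\langle\overline{M}\rangle$ is closed under shifts together with the standard identity $\langle\overline{M}\rangle_{a}\diamond\langle\overline{M}\rangle_{b}=\langle\overline{M}\rangle_{a+b}$, an induction up the sequence gives $\overline{Y_{i}}\in\langle\overline{M}\rangle_{d-1-i}$, and in particular $\overline{X}\in\langle\overline{M}\rangle_{d-1}$. As $X$ was arbitrary, $A\blank\umod=\langle\overline{M}\rangle_{d-1}$, whence $\stab.\dim A=\dim(A\blank\umod)\le d-2=\rep.\dim A-2$.

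The main obstacle lies in the left inequality: controlling the length of the $\add M$-resolution sharply enough --- squeezing it down so that the resulting bound on $\gl.\dim\End_{A}(M)$ is $\LL(A)$ rather than $\LL(A)+1$ --- is where selfinjectivity (via $A\in\add M$ being simultaneously projective and injective) does real work, and it needs the precise form of Auslander's criterion rather than a crude resolution-length estimate. The right inequality is, by contrast, essentially formal once Auslander's characterization is available: the only genuine points to check are that short exact sequences of $A$-modules induce the expected triangles in $A\blank\umod$ and that the indices in the $\diamond$-filtration add up as claimed.
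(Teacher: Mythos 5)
The paper does not actually prove this proposition: it is quoted from Auslander and Rouquier, with only the remark that Auslander established $\LL(A)+1\geq\rep.\dim A$ and that Rouquier sharpened this by showing the equality cannot occur. Your treatment of the second inequality $\rep.\dim A\geq\stab.\dim A+2$ is correct and is exactly Rouquier's argument: break the $\add M$-resolution of length $d-2$ into short exact sequences, pass to triangles in $A\blank\umod$, and count. That $\langle\overline{M}\rangle_a\diamond\langle\overline{M}\rangle_b$ is in general only contained in, rather than equal to, $\langle\overline{M}\rangle_{a+b}$ is harmless, since the inclusion is the direction you use.

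The first inequality, however, has a genuine gap exactly at the point you yourself flag as the main obstacle, and the specific fix you propose does not close it. The iterative construction you describe assigns to a module $X$ of Loewy length $\ell$ an $\add M$-resolution with $\ell$ terms, namely $0\to M_{\ell-1}\to\cdots\to M_0\to X\to 0$; since $\ell$ can equal $r=\LL(A)$, Auslander's criterion then yields only $\gl.\dim\End_A(M)\leq(r-1)+2=r+1$, which is Auslander's bound, not the claimed $r$. Prepending the ordinary projective cover $P\to X$ does not save a term: $\Omega X\subseteq\rad P$ has Loewy length at most $r-1$, so it still requires $r-1$ terms, and together with $P$ you are back to $r$ terms in total; worse, $\Hom_A(M,P)\to\Hom_A(M,X)$ need not be surjective (a map $A/\rad^i A\,e'\to X$ lifts to $Ae'\to P$ by projectivity, but the lift need not annihilate $\rad^i Ae'$), so this first step is not even obviously $\Hom_A(M,-)$-exact. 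The entire content of Rouquier's improvement is the mechanism that removes this one last term --- for instance, working with minimal right $\add M$-approximations, whose kernels already lie in the radical of a module of $\add M$ and hence have Loewy length at most $r-1$ before any terms have been spent, or arguing directly that the extremal case $\rep.\dim A=\LL(A)+1$ cannot occur --- and in your write-up this mechanism is replaced by the placeholders ``with some care'' and an unspecified ``sharp form'' of Auslander's criterion. As written, the argument establishes only $\LL(A)+1\geq\rep.\dim A$.
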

After Auslander proved in \cite{Au} (see Proposition p.55) that $\LL (A)+1 \geq \rep.\dim A$,  
Rouquier has improved it by indicating that the equality does not occur,  
and hence the first inequality in Proposition~\ref{Rou1}. 
\begin{rmkenum}
The stable dimension is always finite by the first inequality in Proposition~\ref{Rou1}. 
Recall also that for any artin algebra, the representation dimension is always finite (see Iyama~\cite{I}). 
\end{rmkenum}

In \cite{Au}, Auslander has also shown the following result.
\begin{thmenum}[Auslander \cite{Au}] \label{Au1}
Let $\Lambda$ be an artin algebra. Then $\Lambda$ is representation-finite if and only if $\rep.\dim \Lambda\leq 2$.
\end{thmenum}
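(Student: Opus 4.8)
The plan is to establish the two implications separately: the forward one via the ``Auslander algebra'' construction, and the converse --- which is the substantial half --- by arguing that a generator-cogenerator witnessing $\rep.\dim \Lambda \le 2$ must in fact be an additive generator of $\mod \Lambda$.

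\emph{Forward direction.} Assume $\Lambda$ is representation-finite; the semisimple case is trivial, so suppose otherwise. Let $M$ be a direct sum of one copy of each indecomposable $\Lambda$-module, so that $\add M = \mod \Lambda$ and $M$ is at once a generator and a cogenerator, and put $\Gamma = \End_\Lambda(M)$. The functor $\Hom_\Lambda(M,-)$ restricts to an equivalence between $\add M$ and the category of projective $\Gamma$-modules, so an arbitrary $\Gamma$-module $Y$ has a projective presentation $\Hom_\Lambda(M,M_1)\to\Hom_\Lambda(M,M_0)\to Y\to 0$ induced by a morphism $g\colon M_1\to M_0$ with $M_i\in\add M$. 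Since $\add M=\mod\Lambda$ we have $\ker g\in\add M$, so $\Hom_\Lambda(M,\ker g)$ is projective; by left exactness of $\Hom_\Lambda(M,-)$ this module is the kernel of $\Hom_\Lambda(M,M_1)\to\Hom_\Lambda(M,M_0)$, i.e.\ a second syzygy of $Y$. Hence $\operatorname{pd}_\Gamma Y\le 2$ for all $Y$, so $\gl.\dim\Gamma\le 2$ and therefore $\rep.\dim\Lambda\le 2$.

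\emph{Converse direction.} Suppose $\rep.\dim\Lambda\le 2$ and fix a generator-cogenerator $M$ with $\Gamma:=\End_\Lambda(M)$ of global dimension $\le 2$. Since $M$ is a generator, $\Hom_\Lambda(M,-)\colon\mod\Lambda\to\mod\Gamma$ is fully faithful, and every $X$ admits an exact sequence $M_1\xrightarrow{g}M_0\to X\to 0$ with $M_i\in\add M$ that remains exact under $\Hom_\Lambda(M,-)$ (a minimal $\add M$-presentation); this maps to a projective presentation of $\Hom_\Lambda(M,X)$ whose second syzygy is $\Hom_\Lambda(M,\ker g)$, and $\gl.\dim\Gamma\le 2$ forces that second syzygy to be projective, whence $\ker g\in\add M$, because the projective $\Gamma$-modules are precisely those of the form $\Hom_\Lambda(M,N)$ with $N\in\add M$. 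Thus every $\Lambda$-module has a length-two resolution $0\to M_2\to M_1\to M_0\to X\to 0$ by objects of $\add M$; running the same argument over $\Lambda\op$ with the generator-cogenerator $DM$ ($D$ the standard duality) and dualizing, every $\Lambda$-module also has a length-two coresolution $0\to X\to M^0\to M^1\to M^2\to 0$ by objects of $\add M$. It remains to upgrade this to $\add M=\mod\Lambda$: then $\mod\Lambda$ has only the finitely many indecomposables occurring in $M$, and $\Lambda$ is representation-finite. (Equivalently: a generator-cogenerator $M$ makes $\Gamma$ have dominant dimension $\ge 2$, so $\Gamma$ satisfies $\gl.\dim\Gamma\le 2\le\operatorname{domdim}\Gamma$; such a $\Gamma$ is the endomorphism algebra of an additive generator over $\End_\Gamma(P)$, with $P$ the projective-injective part of $\Gamma$, and a direct computation via the equivalence between $\add M$ and the projective $\Gamma$-modules identifies $\End_\Gamma(P)$ with $\Lambda$.)

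\emph{Where the obstacle lies.} Everything above except the last step of the converse is formal manipulation of $\Hom_\Lambda(M,-)$ and its left exactness. The crux is the implication ``every module has a length-two resolution \emph{and} coresolution by $\add M$'' $\implies$ ``$\add M=\mod\Lambda$'', equivalently the finiteness encoded in the Auslander correspondence. I expect this to require almost split sequences: if $X$ is indecomposable and $X\notin\add M$ then $X$ is non-projective (as $\add M$ contains all projectives) and no morphism from a summand of $M$ to $X$ is a split epimorphism, so the minimal right almost split morphism $E_X\to X$ mediates every morphism $M\to X$; feeding this, together with the length-two bounds on the resolution of $X$ and on the coresolution of $\tau X$, into an induction on $\dim_k X$ should yield a contradiction. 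This is exactly where both halves of the hypothesis on $M$ --- generator and cogenerator --- and the artinian hypothesis on $\Lambda$ are genuinely used.
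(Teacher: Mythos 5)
The paper states this theorem as a citation to Auslander's Queen Mary notes and gives no proof, so I can only judge your argument on its own terms. Your forward direction is complete and correct: it is the standard computation that the Auslander algebra $\Gamma=\End_\Lambda(M)$ of an additive generator $M$ has global dimension at most $2$, via left exactness of $\Hom_\Lambda(M,\blank)$ and the fact that every kernel already lies in $\add M$.

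The converse, however, is not a proof: you stop at exactly the decisive step and say you \emph{expect} it to follow from almost split sequences and an induction on $\dim_k X$, which is a genuine gap (and the route you sketch is not the one that works). The point you are missing is that the cogenerator half of the hypothesis closes the argument immediately, with no resolutions, no passage to $\Lambda\op$, and no almost split sequences. Let $X$ be any indecomposable $\Lambda$-module and take its minimal injective copresentation $0\to X\to I^0\to I^1$. Since $M$ is a cogenerator, $I^0,I^1\in\add M$, so applying the left exact functor $\Hom_\Lambda(M,\blank)$ exhibits $\Hom_\Lambda(M,X)$ as the kernel of a morphism between projective $\Gamma$-modules, i.e.\ as a second syzygy of the cokernel of that morphism. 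Because $\gl.\dim\Gamma\le 2$, every second syzygy is projective (Schanuel), so $\Hom_\Lambda(M,X)\iso\Hom_\Lambda(M,N)$ for some $N\in\add M$; full faithfulness of $\Hom_\Lambda(M,\blank)$ (which you already established from $M$ being a generator) gives $X\iso N\in\add M$. Hence $\add M$ exhausts the indecomposables and $\Lambda$ is representation-finite. In other words, the ``obstacle'' you isolate is not where the content lies --- the length-two resolutions and the detour through $DM$ over $\Lambda\op$ are unnecessary, and the finiteness falls out of the same formal manipulation of $\Hom_\Lambda(M,\blank)$ that you used everywhere else, applied to a copresentation instead of a presentation.
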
 
Thus we observe that 
any (non-semisimple) representation-finite selfinjective finite-dimensional algebra over a field $k$ has stable dimension $0$.
It has been believed that the converse also holds, 
but this is non-trivial.
We, therefore, give an argument to answer affirmatively to this in the case that $k$ is an algebraically closed field
\section{Main results}
Throughout the rest,  
$k$ denotes a fixed algebraically closed field, 
and all algebras are finite-dimensional associative $k$-algebras with an identity. 

For any $k$-algebra $A$,
we denote by $\Gamma(A)$ the Auslander-Reiten quiver of $A$ 
whose vertices are isoclasses of indecomposable (left) $A$-modules and arrows are irreducible maps. 
We may identify the vertices of $\Gamma(A)$ with the indecomposable $A$-modules. 
Then we have the Auslander-Reiten translation $\tau= D\Tr$ and $\tau^{-1}=\Tr D$
where $D:A\blank\mod \ra A\op\blank\mod$ is the standard duality $\Hom_k (\blank,k)$. 

In the sequel, we assume that 
$A$ is a non-semisimple selfinjective connected algebra, unless otherwise stated.
Thus we have $\tau\iso\Omega^2\nu\iso\nu\Omega^2$ since $\Omega\nu\iso\nu\Omega$,  
where $\nu=D\Hom_A(\blank,A)$ is the Nakayama functor. 

In addition, we denote by ${}_s \Gamma(A)$ the stable Auslander-Reiten quiver of $A$, 
obtained from $\Gamma(A)$ by removing the projective-injective vertices and the arrows attached to them. 
Then the set ${}_s\Gamma(A)_0$ of vertices of ${}_s\Gamma(A)$ coincides with the set of isoclasses of non-projective indecomposable $A$-modules. 
It is well-known that we can recover $\Gamma(A)$ from ${}_s \Gamma(A)$. 
For a connected component $C$ of $\Gamma(A)$, we denote by ${}_s C$ the stable part of $C$. 
Note that the Auslander-Reiten translation $\tau$ is an automorphism of the quivers ${}_s \Gamma(A)$ and ${}_s C$, and its inverse $\tau^{-1}$. 

Then we mention the following fundamental fact on selfinjective algebras of stable dimension $0$.  

\begin{lemenum} \label{lem1}
The following are equivalent$:$ 
\begin{enumerate}
\item $\stab.\dim A = 0$, 
\item ${}_s \Gamma(A)_0$ admits only finitely many $\Omega$-orbits.
\end{enumerate}
\end{lemenum}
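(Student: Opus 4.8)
The statement is an equivalence between a dimension-zero condition and a finiteness condition on $\Omega$-orbits, so I would prove the two implications separately, treating $(1)\Rightarrow(2)$ as the easy direction and $(2)\Rightarrow(1)$ as the one requiring real work. For $(1)\Rightarrow(2)$: if $\stab.\dim A = 0$, then by definition there is a single module $M$ with $\langle M\rangle_1 = \langle M\rangle = A\blank\umod$. Since $\langle M\rangle$ is by construction the full subcategory generated by $M$ under shifts (which here are the powers of $\Omega^{-1}$) and direct summands of finite direct sums, every indecomposable non-projective $A$-module is a summand of a finite direct sum of objects of the form $\Omega^i M_j$, where $M = \bigoplus_j M_j$ is the decomposition into indecomposables. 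Because an indecomposable summand of a finite direct sum $\bigoplus \Omega^{i}M_j$ must (by Krull--Schmidt in $A\blank\umod$, together with the fact that $\Omega$ preserves indecomposability as noted in the excerpt) be some $\Omega^{i}M_j$ itself, it follows that every non-projective indecomposable lies in one of the finitely many $\Omega$-orbits of the $M_j$. Hence ${}_s\Gamma(A)_0$ has only finitely many $\Omega$-orbits.

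For $(2)\Rightarrow(1)$: suppose ${}_s\Gamma(A)_0$ decomposes into finitely many $\Omega$-orbits, say the orbits of $N_1,\dots,N_r$. Set $M = N_1\oplus\cdots\oplus N_r$. I want to show $\langle M\rangle = A\blank\umod$, i.e. that already at stage one (no use of the $\diamond$ operation) we recover the whole stable category. But this is essentially immediate: by hypothesis every non-projective indecomposable $X$ is isomorphic in $A\blank\umod$ to $\Omega^i N_j$ for some $i\in\Integer$ and some $j$, hence $X\in\langle M\rangle$ since $\langle M\rangle$ is closed under the shift $\Omega^{-1}$ and its inverse $\Omega$. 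As $\langle M\rangle$ is closed under finite direct sums and summands and contains $0$, and every object of $A\blank\umod$ is a finite direct sum of non-projective indecomposables (projective summands being zero in the stable category), we get $\langle M\rangle = A\blank\umod$, so $\langle M\rangle_1 = A\blank\umod$ and $\stab.\dim A = 0$.

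**Main obstacle.** The one point that is not purely formal, and which I would want to state carefully, is the interaction between the abstract definition of $\langle\Im\rangle_{n+1}$ via the $\diamond$ operation and the concrete combinatorics of $\Omega$-orbits: specifically, one must be sure that ``closed under shifts and direct summands of finite direct sums'' is literally all that is needed --- that no instance of the octahedral/triangle construction ($\Im * \Jm$) is required for either direction. For $(2)\Rightarrow(1)$ this is clear from the argument above. For $(1)\Rightarrow(2)$ the subtlety is the reverse: one needs that membership in $\langle M\rangle$ forces indecomposable objects to be literally shifts of summands of $M$, which rests on Krull--Schmidt for $A\blank\umod$ (valid since $A$ is a finite-dimensional algebra, so $A\blank\umod$ has finite-dimensional Hom-spaces and split idempotents) and on $\Omega$ being an autoequivalence preserving indecomposability. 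I would make these two observations explicit and the rest follows by unwinding definitions; there is no deep content, which is appropriate since the lemma is labelled a ``fundamental fact.''
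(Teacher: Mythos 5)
Your proof is correct and follows essentially the same route as the paper, which compresses the whole argument into the single observation that $\stab.\dim A = 0$ means $A\blank\umod=\langle M\rangle=\add\{\Omega^i M\mid i\in\Integer\}$ for some $M$; your two directions (Krull--Schmidt to extract indecomposable shifts of summands of $M$, and taking the direct sum of orbit representatives for the converse) are exactly the unwinding of that identity. The extra care you take about Krull--Schmidt in $A\blank\umod$ and about the $\diamond$ operation being unnecessary at level zero is sound but is left implicit in the paper's one-line proof.
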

\begin{proof}
By definition, the assertion follows that $\stab.\dim A = 0$ implies $A\blank\umod=\langle M\rangle=\add\{\Omega^i M| i\in\Integer \}$ for some $M\in A\blank\umod$. 
\end{proof}

An indecomposable $A$-module $M$ is called {\it $\tau$-periodic} if $\tau^n M \iso M$ for some $n>0$. 
Then it is easy to prove the following lemma on a connected component of $\Gamma(A)$ containing a $\tau$-periodic module. 
\begin{lemenum} \label{lem2}
Suppose that a connected component $C$ of $\Gamma(A)$ contains a $\tau$-periodic module.
Then all modules in ${}_s C$ are $\tau$-periodic. 
\end{lemenum}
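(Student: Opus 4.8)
The plan is to propagate $\tau$-periodicity outward from the given periodic module through the whole component, using two ingredients: a pigeonhole argument that carries periodicity across any arrow of the stable Auslander--Reiten quiver, together with some module theory of the projective-injective vertices so that one can also cross the projectives sitting in $C$.

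First I would isolate the local step. Suppose $N\in{}_s\Gamma(A)$ is $\tau$-periodic, say $\tau^{n}N\iso N$ with $n>0$, and suppose there is an arrow in ${}_s\Gamma(A)$ joining $N$ to a module $N'$. Since $\tau$ is an automorphism of the quiver ${}_s\Gamma(A)$ and $\tau^{-kn}N\iso N$ for every $k\ge 0$, applying $\tau^{-kn}$ to that arrow shows that all the modules $\tau^{-kn}N'$ $(k\ge 0)$ are neighbours of $N$ on the same side (all immediate predecessors, or all immediate successors, according to the direction of the arrow). But ${}_s\Gamma(A)$ is locally finite, so $N$ has only finitely many neighbours on each side; hence $\tau^{-kn}N'\iso\tau^{-ln}N'$ for some $0\le k<l$, and therefore $\tau^{(l-k)n}N'\iso N'$, i.e.\ $N'$ is $\tau$-periodic. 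Thus $\tau$-periodicity passes to neighbours in ${}_s\Gamma(A)$.

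Next I would handle the projective vertices of $C$. Because $A$ is selfinjective, an indecomposable projective $P$ is also injective, so $\operatorname{soc} P$ is simple and essential in $P$ while $\operatorname{top} P=P/\rad P$ is simple. Hence every nonzero submodule of $P$, in particular of $\rad P$, contains $\operatorname{soc} P$, so $\rad P$ is uniform and therefore indecomposable; dually $P/\operatorname{soc} P$ has simple top, hence is local and therefore indecomposable. Since the only irreducible maps into $P$ come from $\rad P$ and the only ones out of $P$ go to $P/\operatorname{soc} P$, the projective $P$ has exactly the two neighbours $\rad P$ and $P/\operatorname{soc} P$ in $\Gamma(A)$, both non-projective; in particular no projective vertex of $\Gamma(A)$ is adjacent to another projective vertex. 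Moreover the almost split sequence $0\to\rad P\to(\rad P/\operatorname{soc} P)\oplus P\to P/\operatorname{soc} P\to 0$ yields $\tau(P/\operatorname{soc} P)\iso\rad P$, so $\rad P$ is $\tau$-periodic if and only if $P/\operatorname{soc} P$ is.

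Finally I would assemble the argument. Let $M\in{}_s C$ be the given $\tau$-periodic module and $N\in{}_s C$ arbitrary; since $C$ is connected, choose a walk $M=X_0-X_1-\cdots-X_r=N$ in $C$ and induct along it, tracking the last non-projective vertex visited. Between two consecutive non-projective vertices the arrow lies in ${}_s\Gamma(A)$, so the second paragraph applies; and whenever the walk passes through a projective vertex $X_i$ --- which is then isolated among projectives, with $X_{i-1},X_{i+1}$ non-projective and each equal to $\rad X_i$ or $X_i/\operatorname{soc} X_i$ --- the displayed isomorphism forces periodicity of $X_{i+1}$ from that of $X_{i-1}$. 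Since $X_0=M$ is $\tau$-periodic, it follows that $N$ is. The one point deserving care is that ${}_s C$ itself need not be connected (removing a projective $P$ with $\rad P$ simple can disconnect it), which is exactly why the walk must be taken in $C$ and the projective vertices crossed by hand; once one notes that $\rad P$ and $P/\operatorname{soc} P$ are indecomposable this crossing is immediate, and everything else is routine.
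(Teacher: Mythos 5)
Your proof is correct and complete. The paper itself offers no proof of this lemma (it is merely declared ``easy to prove''), and your argument --- propagating $\tau$-periodicity across arrows of ${}_s\Gamma(A)$ by the pigeonhole principle applied to the finitely many neighbours of a vertex, and crossing each projective-injective vertex $P$ of $C$ by hand using that $\rad P$ and $P/\operatorname{soc}P$ are its only (indecomposable, non-projective) neighbours with $\tau(P/\operatorname{soc}P)\iso\rad P$ --- is exactly the standard argument the author presumably had in mind, including the correct caution that one must walk in $C$ rather than in ${}_sC$.
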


Given an indecomposable $A$-module $M$, we denote by $\tau^{\Integer}M$ the $\tau$-orbit of $M$, 
that is, $\tau^{\Integer} M:=\{\tau^n M\ |\ n\in\Integer\}$. 
Similarly, the $\Omega$-orbit of $M$ and the $\langle\tau,\ \Omega\rangle$-orbit of $M$ 
are denoted by $\Omega^{\Integer}M:=\{\Omega^m M\ |\ m\in\Integer\}$ and
$\tau^{\Integer}\Omega^{\Integer}M:=\{\tau^n\Omega^m M\ |\ n,m\in\Integer\}$, respectively. 

For a set $S$ of indecomposable $A$-modules, 
we denote by $\d(S)$ the supremum of $k$-dimensions of modules in $S$,  
namely, $$\textstyle\d(S):=\sup_{X\in S} (\dim_k X).$$ 
Then 
\begin{lemenum} \label{lem3}
If an indecomposable $A$-module $M$ is $\tau$-periodic, then $\d(\Omega^{\Integer}M)<\infty.$
\end{lemenum}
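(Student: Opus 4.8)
The plan is to exploit the relation $\tau \iso \nu \Omega^2$, which ties the $\tau$-orbit of $M$ to its $\Omega$-orbit in a controlled way, together with the fact that $\nu$, being induced by an algebra automorphism up to isomorphism (the Nakayama automorphism), preserves $k$-dimension. First I would observe that since $M$ is $\tau$-periodic, say $\tau^n M \iso M$ with $n>0$, we have $\nu^n \Omega^{2n} M \iso M$. Now $\nu$ is an equivalence of $A\blank\mod$ with itself that preserves dimensions, so $\dim_k \Omega^{2n}M = \dim_k \nu^n \Omega^{2n} M = \dim_k M$. This already pins down the dimensions of $\Omega^{2n}M, \Omega^{4n}M, \dots$ and, running $\Omega^{-1}$, of all $\Omega^{2jn}M$ for $j \in \Integer$: they are all equal to $\dim_k M$.

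Next I would handle the intermediate syzygies. The $\Omega$-orbit $\Omega^{\Integer}M$ consists of the modules $\Omega^i M$ for $i \in \Integer$; writing $i = 2jn + r$ with $0 \le r < 2n$, we get $\Omega^i M \iso \Omega^r(\Omega^{2jn}M)$, and $\Omega^{2jn}M$ lies in the finite set $\{\Omega^{2jn}M : 0 \le \text{(exponent shift)} \}$ — more precisely, since $\tau^n M \iso M$ and $\tau \iso \nu\Omega^2$, the modules $\Omega^{2jn}M$ for varying $j$ are all obtained from $M$ by applying powers of $\nu$, hence up to isomorphism there are only finitely many of them (at most the order of the relevant power of $\nu$, or in any case a dimension-bounded family since they all have dimension $\dim_k M$, and there are only finitely many indecomposables of a given dimension up to isomorphism over an algebraically closed field). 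Therefore $\Omega^{\Integer}M$ is covered by the finitely many sets $\{\Omega^r N : 0 \le r < 2n\}$ as $N$ ranges over this finite family, so $\Omega^{\Integer}M$ is itself a finite set of isoclasses, and in particular $\d(\Omega^{\Integer}M) < \infty$.

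A cleaner way to phrase the same argument, which I would probably use: set $d := \dim_k M$. By the computation above, $\dim_k \Omega^{2jn}M = d$ for all $j$, and since the syzygy of an indecomposable has dimension at most $(\LL(A)-1)$ times that of the module (the first syzygy embeds in the radical of a projective cover, whose dimension is controlled), the dimensions of $\Omega^r N$ for $0 \le r < 2n$ and $\dim_k N = d$ are bounded by $d \cdot (\text{something depending only on } A \text{ and } n)$. Hence $\d(\Omega^{\Integer}M) \le d \cdot C(A,n) < \infty$.

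The main obstacle I anticipate is making the dimension bound on syzygies precise and ensuring it applies uniformly: one must check that $\dim_k \Omega X \le c\, \dim_k X$ for a constant $c$ depending only on $A$ (e.g. $c = \dim_k A$, since the projective cover of an indecomposable $X$ has dimension at most $\dim_k(X)\cdot(\text{number of simples}) \cdot \dim_k A$ or a similar crude bound), and similarly for $\Omega^{-1}$ using that $A$ is selfinjective so $\Omega^{-1}X$ embeds as a cokernel of an injective envelope. Once that uniform linear bound is in hand, periodicity of $\tau$ confines the "anchor" dimensions $\dim_k \Omega^{2jn}M$ to a single value, and the finitely many bounded syzygy steps between anchors give the result. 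Everything else is bookkeeping with the isomorphism $\tau \iso \nu\Omega^2$ and the dimension-preservation of $\nu$.
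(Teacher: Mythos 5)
Your core identity is the same as the paper's ($\tau\iso\nu\Omega^2$ together with the fact that $\nu$ preserves $k$-dimension), and your ``cleaner'' second formulation does yield a correct proof. But you take a detour the paper avoids: you only extract the consequence $\dim_k\Omega^{2jn}M=\dim_k M$ at the anchors $i=2jn$ and then have to patch the intermediate syzygies with a uniform linear bound $\dim_k\Omega X\le c\,\dim_k X$ (which is true, e.g.\ with $c=\dim_k A$ via the projective cover, but is an extra ingredient you must justify). The paper instead applies the relation to \emph{every} term of the orbit: since $\nu\Omega\iso\Omega\nu$, the isomorphism $\nu^nM\iso\Omega^{-2n}M$ gives $\nu^n\Omega^iM\iso\Omega^{i-2n}M$ for all $i$, so $\dim_k\Omega^iM$ is literally periodic in $i$ with period $2n$ and takes only finitely many values --- no syzygy estimate needed. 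You should restructure along these lines; it removes your ``main obstacle'' entirely.

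Separately, one assertion in your first formulation is false and must be deleted: over an algebraically closed field it is \emph{not} true that there are only finitely many indecomposables of a given dimension up to isomorphism (the Kronecker algebra already has a one-parameter family in dimension $2$; indeed, if this claim held for every dimension, the algebra would be representation-finite, which is essentially what the whole paper is trying to prove). Consequently your conclusion that $\Omega^{\Integer}M$ is a \emph{finite} set of isoclasses is not justified by that route; fortunately the lemma only asks for $\d(\Omega^{\Integer}M)<\infty$, which your second, dimension-bounding argument (or the paper's periodicity argument) does deliver.
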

\begin{proof}
By the hypothesis, there exists $n>0$ such that $\tau^n M \iso M$. 
Since $\tau \iso \Omega^2\nu$, 
we have $\nu^{n}M\iso\Omega^{-2n}M$.   
Since the Nakayama functor $\nu$ preserves the dimension of modules, 
$\dim_k \Omega^i M$ is determined by $i$ modulo $2n$. 
The assertion follows. 
\end{proof}

Therefore, we have the following fact.  
\begin{lemenum} \label{lem4}
If ${}_s \Gamma(A)_0$ admits only finitely many $\Omega$-orbits,
then $$\d(\{\text{all } \tau\text{-periodic modules in } {}_s \Gamma(A)\})<\infty.$$ 
\end{lemenum}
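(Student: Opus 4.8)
The statement asks us to bound the $k$-dimensions of all $\tau$-periodic modules in ${}_s\Gamma(A)$, given that ${}_s\Gamma(A)_0$ has only finitely many $\Omega$-orbits. The plan is to combine Lemma~\ref{lem3} with a partition of the $\tau$-periodic modules into $\Omega$-orbits. First I would observe that, by hypothesis, there are finitely many $\Omega$-orbits in ${}_s\Gamma(A)_0$ altogether, say represented by $M_1,\dots,M_r$; in particular the (possibly empty) subset of those orbits consisting of $\tau$-periodic modules is also finite, say indexed by a subset $I\subseteq\{1,\dots,r\}$. The key point that makes this clean is that the syzygy functor $\Omega$ commutes with $\tau$ (since $\tau\iso\Omega^2\nu$ and $\Omega\nu\iso\nu\Omega$), so $\tau$-periodicity is constant along an $\Omega$-orbit: if $M$ is $\tau$-periodic with $\tau^nM\iso M$, then $\tau^n\Omega^mM\iso\Omega^m\tau^nM\iso\Omega^mM$, so every module in $\Omega^{\Integer}M$ is $\tau$-periodic as well. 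Hence the set of all $\tau$-periodic modules in ${}_s\Gamma(A)$ is exactly the union $\bigcup_{i\in I}\Omega^{\Integer}M_i$ of the finitely many $\Omega$-orbits through $\tau$-periodic representatives.

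Next I would apply Lemma~\ref{lem3} to each of the finitely many representatives: for each $i\in I$, since $M_i$ is $\tau$-periodic we get $\d(\Omega^{\Integer}M_i)<\infty$. Then
$$
\d(\{\text{all }\tau\text{-periodic modules in }{}_s\Gamma(A)\})
=\d\Bigl(\bigcup_{i\in I}\Omega^{\Integer}M_i\Bigr)
=\max_{i\in I}\,\d(\Omega^{\Integer}M_i)<\infty,
$$
a maximum of finitely many finite numbers, which is finite. (If $I=\varnothing$, i.e.\ there are no $\tau$-periodic modules, the supremum is $-\infty$ or $0$ by convention and the bound is trivial.)

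There is essentially no obstacle here: the lemma is a straightforward packaging of Lemma~\ref{lem3} once one notes that $\tau$-periodicity is an $\Omega$-orbit invariant and that the hypothesis gives finitely many orbits. The only thing to be slightly careful about is the bookkeeping that every $\tau$-periodic indecomposable non-projective module lies in one of the listed $\Omega$-orbits $\Omega^{\Integer}M_i$ with $M_i$ itself $\tau$-periodic — which is immediate from the $\Omega$-invariance of $\tau$-periodicity above — and that a finite union of dimension-bounded sets is dimension-bounded. I do not expect any subtlety beyond this routine argument.
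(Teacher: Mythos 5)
Your proposal is correct and follows essentially the same route as the paper: both decompose the set of $\tau$-periodic modules into the finitely many $\Omega$-orbits through $\tau$-periodic representatives (using that $\tau$-periodicity is constant along $\Omega$-orbits) and then apply Lemma~\ref{lem3} to each orbit. The only cosmetic difference is that you start from representatives of all $\Omega$-orbits and restrict to a subset, whereas the paper lists representatives of the $\tau$-periodic orbits directly.
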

\begin{proof}
Let $M_1,\cdots,M_l$ be a complete list of representatives of $\Omega$-orbits of $\tau$-periodic modules in ${}_s \Gamma(A)_0$.
Note that if $M$ is $\tau$-periodic, then all modules in $\Omega^{\Integer}M$ are $\tau$-periodic.
Then $\cup_{i} \Omega^{\Integer}M_i$ is just the set of all $\tau$-periodic modules in ${}_s \Gamma(A)$.   
By Lemma~\ref{lem3} we have 
$$
\begin{array}{rcl}
\d(\{\text{all } \tau\text{-periodic modules in } {}_s \Gamma(A)\})&=&\d(\cup_{i} \Omega^{\Integer}M_i)\\
&=&\sup_{i} \d(\Omega^{\Integer}M_i) <\infty,
\end{array}
$$
as desired. 
\end{proof}

Now, we mention the following well-known fact on representation-finite algebras due to Auslander. 
\begin{lemenum}[Auslander \cite{AMS}] \label{Au2}
Let $A$ be a finite-dimensional connected $k$-algebra,  
and let $C$ be a connected component of $\Gamma(A)$. 
If $\d(C)<\infty$, then $A$ is representation-finite.
\end{lemenum}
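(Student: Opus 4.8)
The plan is to split the claim in two: \emph{Step 1}, that $\d(C)<\infty$ forces the quiver $C$ to be finite; and \emph{Step 2}, that when $A$ is connected a finite connected component of $\Gamma(A)$ must be all of $\Gamma(A)$, so that $A$ is representation-finite. The real content is Step~1, a relative version of the first Brauer--Thrall theorem; Step~2 is a standard consequence of the closure properties of a finite Auslander--Reiten component together with the connectedness of $A$.

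For Step~1, set $d:=\d(C)$. First, $C$ is locally finite: if $0\ra\tau M\ra E\ra M\ra 0$ is an Auslander--Reiten sequence with $M\in C$, then $\dim_k E=\dim_k M+\dim_k\tau M\le 2d$, so $E$ has at most $2d$ indecomposable summands, and similarly $\rad P$ and $P/\operatorname{soc}P$ have fewer than $d$ indecomposable summands for a projective, resp.\ injective, $P\in C$; hence every vertex of $C$ has bounded valency. Next, $X\mapsto\dim_k X$ is a positive integral function on the translation quiver $C$ that is additive on meshes (this is precisely the relation $\dim_k E=\dim_k M+\dim_k\tau M$, with the obvious modification at projective and injective vertices), and by hypothesis it is bounded. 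Now one invokes the structure theory of Auslander--Reiten components: by Riedtmann's structure theorem applied to the stable part of $C$, together with the Happel--Preiser--Ringel description of translation quivers carrying a positive additive function, an infinite such component would have tree class an infinite or Euclidean diagram, and on every one of those a positive additive function is unbounded --- contradicting the boundedness of $\dim_k(-)$ on $C$. Hence $C$ is finite. (Alternatively: were $C$ infinite and bounded it would, being locally finite, contain sectional paths of arbitrary length; the composite of the irreducible maps along a sectional path is nonzero, which for a path of length equal to the Harada--Sai bound $b(d)$ contradicts the vanishing of composites of $b(d)$ non-isomorphisms between indecomposables of length $\le d$.)

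For Step~2, suppose $C$ is finite. Then $C$ contains an indecomposable projective, it is closed under $\tau$ and $\tau^{-1}$ on its non-projective, resp.\ non-injective, vertices and under passing to the source or the target of an arrow incident with it, and, being finite, $\rad^{\infty}$ vanishes along it. Using the identity $[P:S]=\dim_k\Hom_A(P_S,P)$ one deduces that if an indecomposable projective $P$ lies in $C$ then so does the projective cover $P_S$ of each composition factor $S$ of $P$ (a nonzero non-isomorphism $P_S\ra P$ with $P_S\notin C$ would have to lie in $\rad^{\infty}$); since $A$ is connected this propagates to every indecomposable projective, and one more such step, now applied to an arbitrary indecomposable together with its projective cover, puts every indecomposable $A$-module into $C$. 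Hence $C=\Gamma(A)$ and $A$ is representation-finite. The main obstacle is Step~1 --- concretely, the point where one must call on the structure theory of Auslander--Reiten components (equivalently, on the non-vanishing of composites of irreducible maps along long sectional paths); Step~2, by contrast, is essentially bookkeeping.
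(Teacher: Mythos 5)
First, note that the paper does not prove this lemma at all: it is quoted from Auslander--Reiten--Smal\o\ \cite{AMS} (it is Theorem VI.1.4 there), where the proof runs through the Harada--Sai lemma. So your two-step architecture (Step 1: $\d(C)<\infty$ forces $C$ finite; Step 2: a finite component over a connected algebra is all of $\Gamma(A)$) matches the cited source in outline, but your main argument for Step 1 has a genuine hole. It is simply not true that ``on every infinite or Euclidean diagram a positive additive function is unbounded.'' Every Euclidean diagram carries its null root, which is a bounded positive additive function; the constant function is additive on $A_\infty^\infty$; and $D_\infty$ (and likewise $B_\infty$, $C_\infty$) admits the bounded additive function taking value $1$ on the two prongs and $2$ elsewhere. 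Pulling such a function back $\tau$-invariantly to $\Integer[\Delta]/G$ gives a bounded positive function additive on meshes. Among the tree classes allowed by Happel--Preiser--Ringel, only $A_\infty$ forces a positive additive function to be unbounded. So boundedness of $\dim_k(-)$ on $C$ does not, via HPR alone, contradict infiniteness of $C$, and the intended contradiction evaporates exactly in the Euclidean and $A_\infty^\infty$ cases.

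Your parenthetical alternative points at the right tool, but it too rests on an unproved (and, for abstract translation quivers, false) claim: an infinite locally finite component need not contain long sectional paths --- $\Integer A_2$ is infinite, locally finite, and has no sectional path of length $2$. The standard argument sidesteps both problems: given $Y\in C$ and a nonzero non-isomorphism $f\colon X\to Y$ with $X$ indecomposable, factor $f$ through the almost split sequence (or radical of the projective) ending at $Y$ and iterate; by Harada--Sai applied to the bound $d=\d(C)$ the iteration must terminate in an isomorphism within $2^{d}-1$ steps, so $X\in C$. This single statement (and its dual) yields everything: $C$ absorbs the indecomposable summands of projective covers of its members, connectedness of $A$ then forces all indecomposable projectives and hence all indecomposables into $C$, and $C$ is finite because every vertex lies within irreducible-map distance $2^{d}-1$ of a fixed one in a locally finite quiver. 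I would replace your Step 1 by this factorization argument; it also repairs the small gap at the start of your Step 2, where you assert rather than prove that a finite component contains an indecomposable projective.
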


By using this lemma, we prove the following key proposition. 

\begin{prpenum} \label{keyprp}
Assume that ${}_s \Gamma(A)_0$ admits only finitely many $\Omega$-orbits.
If there exists a $\tau$-periodic module in ${}_s \Gamma(A)$, then $A$ is representation-finite.
\end{prpenum}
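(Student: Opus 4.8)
The plan is to reduce everything to Lemma~\ref{Au2}: if we can exhibit a connected component $C$ of $\Gamma(A)$ with $\d(C)<\infty$, then $A$ is representation-finite. So suppose $M_0$ is a $\tau$-periodic module in ${}_s \Gamma(A)$, and let $C$ be the connected component of $\Gamma(A)$ containing $M_0$. By Lemma~\ref{lem2}, every module in the stable part ${}_s C$ is then $\tau$-periodic. By Lemma~\ref{lem4}, the finiteness of the number of $\Omega$-orbits in ${}_s\Gamma(A)_0$ forces $\d(\{\text{all }\tau\text{-periodic modules in }{}_s\Gamma(A)\})<\infty$; in particular $\d({}_s C)<\infty$. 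The only gap between this and $\d(C)<\infty$ is the handful of projective-injective vertices of $C$ that were removed in passing to ${}_s C$, so I would argue that there are only finitely many of them and each has finite dimension, whence $\d(C)<\infty$ as well.

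The step that needs care is therefore controlling the projective-injective modules sitting in $C$. Here I would use the structure of components containing $\tau$-periodic modules: by a theorem of Happel--Preiser--Ringel, a stable component consisting of $\tau$-periodic modules has tree class a Dynkin or Euclidean diagram (or one of $A_\infty$, $D_\infty$, $A_\infty^\infty$), and the shape of ${}_s C$ as a stable translation quiver is then highly restricted. In any case, since $A$ is selfinjective and connected, the projective-injective modules attached to $C$ correspond, via their radicals or radical-mod-socle, to modules lying in ${}_s C$; more directly, each indecomposable projective-injective $P$ with $P\in C$ has $\operatorname{rad} P/\operatorname{soc} P$ (or the relevant middle term of the almost split sequence $0\to\operatorname{rad} P\to P\oplus(\operatorname{rad} P/\operatorname{soc} P)\to P/\operatorname{soc} P\to 0$) built from modules in ${}_s C$, so $\dim_k P$ is bounded by a fixed linear function of dimensions of modules in ${}_s C$. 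Combined with $\d({}_s C)<\infty$ this bounds $\dim_k P$ uniformly. Finally, the number of isoclasses of indecomposable projective $A$-modules is finite (it equals the number of simple modules), so only finitely many of them lie in $C$, and their dimensions are automatically finite; thus adding them back does not affect finiteness of $\d$.

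Putting the pieces together: $\d(C)=\max\{\d({}_s C),\ \max\{\dim_k P \mid P\text{ projective-injective in }C\}\}<\infty$, so Lemma~\ref{Au2} applies and $A$ is representation-finite.

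I expect the main obstacle to be the bookkeeping around reattaching the projective-injectives — making precise that every indecomposable projective-injective appearing in $C$ has its dimension controlled by neighbouring non-projective modules in ${}_s C$, and that finitely many such projectives occur in $C$. Once that is in hand, the rest is a direct concatenation of Lemmas~\ref{lem2}, \ref{lem4} and \ref{Au2}. An alternative, perhaps cleaner route would be to bypass the projective-injectives entirely by noting that a stable component all of whose modules are $\tau$-periodic and which has $\d<\infty$ must in fact be a finite stable translation quiver (a periodic component of bounded dimension over an algebraically closed field has only finitely many vertices); then $C$ itself is finite, so trivially $\d(C)<\infty$.
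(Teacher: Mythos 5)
Your proposal is correct and follows essentially the same route as the paper: combine Lemma~\ref{lem2}, Lemma~\ref{lem4} and the fact that $C$ contains only finitely many projective-injective vertices (each of finite $k$-dimension) to get $\d(C)<\infty$, then apply Lemma~\ref{Au2}. The excursion into Happel--Preiser--Ringel and bounding $\dim_k P$ via almost split sequences is unnecessary --- your own closing observation that there are only finitely many indecomposable projectives, all finite-dimensional, already suffices, and that is exactly what the paper uses.
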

\begin{proof}
Let $X$ be a $\tau$-periodic module in ${}_s \Gamma(A)$, 
and let $C$ be a connected component of $\Gamma(A)$ containing $X$.  
By Lemma~\ref{lem2}, all modules in ${}_s C$ are $\tau$-periodic. 
Note that there exist only finitely many projective-injective vertices.
Then by Lemma~\ref{lem4}, $\d(C)<\infty$,  
and thus $A$ is representation-finite by Lemma~\ref{Au2}. 
\end{proof}

Here, we mention the following fact on non-$\tau$-periodic modules.
\begin{lemenum} \label{lem5}
Let $M$ be an indecomposable $A$-module.  
Assume that there exists a positive integer $n$ such that $\nu^n M \in \Omega^{\Integer} M$.  
If $M$ is not $\tau$-periodic, 
then $\tau^{\Integer}\Omega^{\Integer}M$ is a union of finite number of $\tau$-orbits.  
\end{lemenum}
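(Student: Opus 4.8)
The plan is to exploit the hypothesis $\nu^n M \in \Omega^{\Integer} M$ to show that the syzygy functor, acting on the $\tau$-orbit structure of $M$, has finite order in an appropriate sense, so that the two-parameter family $\{\tau^a \Omega^b M\}$ collapses onto finitely many $\tau$-orbits. Write $\nu^n M \iso \Omega^{c} M$ for some integer $c$. Since $\tau \iso \Omega^2 \nu \iso \nu \Omega^2$ and $\Omega$ commutes with $\nu$, we may rewrite any $\tau^a \Omega^b M$ as $\nu^{a}\Omega^{2a+b} M$. First I would reduce everything to $\tau$-orbits of the modules $\Omega^b M$: it suffices to prove that the set $\{\, \tau^{\Integer}\Omega^{b} M \mid b \in \Integer \,\}$ is finite, since $\tau^{\Integer}\Omega^{\Integer}M$ is then the (finite) union of these orbits.

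Next I would use the relation $\nu^n M \iso \Omega^c M$ to produce a periodicity in $b$. Applying $\Omega^{2n}$ to both sides and using $\tau^n \iso \Omega^{2n}\nu^n$, we get $\tau^n M \iso \Omega^{2n}\nu^n M \iso \Omega^{2n+c} M$. Therefore $\Omega^{2n+c}M$ lies in the $\tau$-orbit of $M$, i.e. $\tau^{\Integer}\Omega^{2n+c}M = \tau^{\Integer}M$. Applying $\Omega^b$ to this identity (and using that $\Omega$ commutes with $\tau$ on the stable category, since $\Omega$ is an equivalence and $\tau \iso \Omega^2\nu$ with $\Omega\nu\iso\nu\Omega$), we obtain $\tau^{\Integer}\Omega^{b+2n+c}M = \tau^{\Integer}\Omega^{b}M$ for all $b \in \Integer$. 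Hence the assignment $b \mapsto \tau^{\Integer}\Omega^{b}M$ is periodic in $b$ with period dividing $|2n+c|$ — provided $2n+c \neq 0$. This shows $\{\tau^{\Integer}\Omega^b M \mid b \in \Integer\}$ has at most $|2n+c|$ elements, and we are done in that case.

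The main obstacle is the degenerate case $2n+c=0$, i.e. when $\tau^n M \iso M$ would follow directly — but that is exactly the $\tau$-periodic situation we have excluded by hypothesis. So I would argue: if $2n+c=0$ then $\Omega^c M = \Omega^{-2n}M$, whence $\nu^n M \iso \Omega^{-2n}M$, and applying $\Omega^{2n}$ gives $\Omega^{2n}\nu^n M \iso M$, that is $\tau^n M \iso M$, contradicting the assumption that $M$ is not $\tau$-periodic. Therefore $2n+c \neq 0$ necessarily, the periodicity argument applies, and $\tau^{\Integer}\Omega^{\Integer}M$ is a union of at most $|2n+c|$ $\tau$-orbits. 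A minor point to handle carefully is the commutativity of $\Omega$ with $\tau$ on $A\blank\umod$ and the fact that $\Omega$ permutes $\tau$-orbits bijectively (so that applying $\Omega^b$ to an orbit identity is legitimate); both follow from $\Omega$ being a triangle equivalence together with $\tau\iso\nu\Omega^2\iso\Omega^2\nu$.
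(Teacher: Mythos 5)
Your proposal is correct and follows essentially the same route as the paper: both derive $\tau^n M\iso\Omega^{2n+c}M$ from $\tau\iso\Omega^2\nu$, use the non-$\tau$-periodicity of $M$ to conclude $2n+c\neq 0$, and then exploit this relation to reduce all powers of $\Omega$ modulo $t=|2n+c|$, expressing $\tau^{\Integer}\Omega^{\Integer}M$ as a union of at most $t$ $\tau$-orbits. Your explicit treatment of the degenerate case and of the commutation of $\Omega$ with $\tau$ merely spells out details the paper leaves implicit.
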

\begin{proof}
Let $m$ be an integer such that $\nu^{n} M \iso \Omega^{m}M$. 
By the hypothesis we have 
$$\tau^{n} M \iso \Omega^{2n}\nu^{n} M \iso \Omega^{2n+m} M (\niso M)$$ with $2n+m\neq 0$. 
Put $t:=|2n+m|$.  
Then $$\Omega^{\Integer}M\subset\tau^{n\Integer}M\cup\tau^{n\Integer}\Omega M\cup\cdots\cup\tau^{n\Integer}\Omega^{t-1}M,$$ 
and thus we have 
$$\tau^{\Integer}\Omega^{\Integer}M=\tau^{\Integer}M\cup\tau^{\Integer}\Omega M\cup\cdots\cup\tau^{\Integer}\Omega^{t-1}M,$$ 
as desired.
\end{proof}

Moreover, we need Liu's characterization of representation-finite algebras over an algebraically closed field to prove our main theorem.  

\begin{lemenum}[Liu \cite{Liu} 3.11.Proposition p.52] \label{Liu}
Let $A$ be a finite-dimensional algebra over an algebraically closed field $k$. 
Then $A$ is representation-finite 
if and only if $\Gamma(A)$ admits only finitely many $\tau$-orbits.   
\end{lemenum}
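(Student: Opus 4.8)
\textbf{The forward implication} is immediate: if $A$ is representation-finite then $\Gamma(A)$ has only finitely many vertices, hence a fortiori only finitely many $\tau$-orbits. The content is therefore the converse, and I would prove its contrapositive: if $A$ is representation-infinite, then $\Gamma(A)$ admits infinitely many $\tau$-orbits. The decisive external input is the second Brauer--Thrall theorem, which is available precisely because $k$ is algebraically closed (this, I expect, is the sole role of that hypothesis, and it explains why Liu's statement demands it): if $A$ is representation-infinite, then there is a dimension $d_0$ for which there exist infinitely many pairwise non-isomorphic indecomposable $A$-modules of $k$-dimension $d_0$. Note that the weaker first Brauer--Thrall theorem, giving only unbounded dimensions, does not suffice, since a single infinite $\tau$-orbit (e.g.\ a preprojective one) already realizes unbounded dimensions.

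\textbf{Reduction by pigeonhole.} Suppose, for contradiction, that $\Gamma(A)$ has only finitely many $\tau$-orbits, say $N$ of them, while $A$ is representation-infinite. Pick $d_0$ and the infinite family $X_1, X_2, \ldots$ of indecomposables with $\dim_k X_m = d_0$ supplied by Brauer--Thrall~II. Each $X_m$ is a vertex of $\Gamma(A)$ and so lies in exactly one of the $N$ $\tau$-orbits; by the pigeonhole principle some single $\tau$-orbit $O$ contains infinitely many of the $X_m$, hence infinitely many pairwise non-isomorphic modules all of dimension $d_0$. The whole proof then reduces to the following claim, which I would isolate as a lemma: \emph{every $\tau$-orbit contains only finitely many isoclasses of modules of any prescribed $k$-dimension.}

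\textbf{The crux.} Proving this claim is where the real work lies, and the step I expect to be the main obstacle. A $\tau$-orbit is either finite, in which case there is nothing to prove, or infinite, meaning $M$ is not $\tau$-periodic and the $\tau^n M$ ($n\in\Integer$) are pairwise non-isomorphic; I must then rule out that the value $d_0$ recurs infinitely often along such an orbit. The plan is to track how $\tau$ acts on dimension vectors in the Grothendieck group $K_0$: passing from $M$ to $\tau M$ is governed by the Coxeter transformation, whose spectral behaviour (roots of unity in the Dynkin/Euclidean situations, spectral radius exceeding one in the wild situation) forces the dimensions $\dim_k \tau^n M$ along an aperiodic orbit to recur only finitely often, growing without bound in at least one direction. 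The delicate point is making this precise for a general, possibly non-hereditary algebra, where $\tau$ is not exact and its action on $K_0$ is only piecewise linear because of the correction terms coming from projective and injective summands in a minimal projective presentation; here one leans on the Happel--Preiser--Ringel structure theory of the component containing $O$ to control the shape of the orbit, and hence the dimension function along it. Once the claim is established, the infinitely many $X_m$ of dimension $d_0$ cannot be absorbed by a single orbit, contradicting the pigeonhole conclusion and completing the argument.
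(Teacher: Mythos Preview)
The paper does not actually prove this lemma: it is quoted from Liu's paper and accompanied only by the one-sentence remark ``This follows from the second Brauer--Thrall conjecture.'' Your proposal is therefore not competing against a proof in the paper but attempting to fill in what the paper deliberately outsources. On that level you are in agreement with the paper's remark: you correctly identify the second Brauer--Thrall theorem as the essential input and correctly locate the algebraically-closed hypothesis there.

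That said, your argument has a genuine gap, and you flag it yourself. The reduction via pigeonhole to the claim ``every $\tau$-orbit contains only finitely many modules of a given dimension'' is fine, but the claim is not established. The Coxeter-transformation reasoning you sketch is valid only for hereditary algebras, where $\tau$ acts linearly on $K_0$; for general $A$ the action is, as you say, only piecewise linear, and invoking spectral radius loses its force. Saying one ``leans on Happel--Preiser--Ringel'' is not a proof: the Riedtmann structure theorem tells you the stable component has the form $\mathbb{Z}\Delta/G$, but extracting from this that dimensions along an aperiodic $\tau$-orbit cannot repeat infinitely often is itself the hard part and requires exactly the kind of analysis (subadditive/additive functions on $\Delta$, growth along sectional paths) that Liu's paper develops. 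In short, the step you label ``the crux'' really is the whole content of Liu's Proposition~3.11, and you have restated it rather than proved it.

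If you want to close the gap, the route closest to Liu's own argument is not dimension-counting along a single orbit but component-shape: from finitely many $\tau$-orbits one gets finitely many components, each with finitely many $\tau$-orbits; Liu's degree theory then forces any infinite such component to be a stable tube, and one finishes by showing (again via the second Brauer--Thrall theorem) that a representation-infinite algebra cannot have only finitely many tubes together with finitely many other modules.
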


This follows from the second Brauer-Thrall conjecture. 
Therefore, the assumption that the base field $k$ is algebraically closed is essential. \\

Now we prove our main theorem. 

\begin{thmenum} \label{Y1}
Let $A$ be a non-semisimple selfinjective finite-dimensional connected algebra over an algebraically closed field $k$.  
If the set ${}_s \Gamma(A)_0$ of isoclasses of non-projective indecomposable $A$-modules admits only finitely many $\Omega$-orbits, 
then $A$ is representation-finite.
\end{thmenum}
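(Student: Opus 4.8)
The plan is to reduce the theorem to Liu's criterion, Lemma~\ref{Liu}: it suffices to prove that $\Gamma(A)$ has only finitely many $\tau$-orbits, and since $\Gamma(A)$ has only finitely many projective-injective vertices, this is the same as showing that ${}_s\Gamma(A)_0$ is a finite union of $\tau$-orbits. I would split the argument according to whether or not ${}_s\Gamma(A)$ contains a $\tau$-periodic module. If it does, then Proposition~\ref{keyprp} applies immediately and $A$ is representation-finite, so there is nothing further to do in that case.

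Now assume ${}_s\Gamma(A)$ contains no $\tau$-periodic module, and let $M_1,\dots,M_r$ be a complete set of representatives of the $\Omega$-orbits of ${}_s\Gamma(A)_0$ (finitely many, by hypothesis). The key point is that the Nakayama functor $\nu$ induces an auto-equivalence of $A\blank\umod$ that commutes with $\Omega$ — indeed $\nu\iso\Omega^{-2}\tau$ on $A\blank\umod$, using $\tau\iso\Omega^2\nu$ and $\Omega\nu\iso\nu\Omega$ — and hence $\nu$ permutes the finitely many $\Omega$-orbits of indecomposable non-projective modules. Consequently some power $\nu^n$ with $n>0$ fixes every $\Omega$-orbit, that is, $\nu^n M_i\in\Omega^{\Integer}M_i$ for each $i$. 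Since no $M_i$ is $\tau$-periodic, Lemma~\ref{lem5} shows that each $\tau^{\Integer}\Omega^{\Integer}M_i$ is a finite union of $\tau$-orbits. As $\Omega$ and $\tau$ preserve non-projective indecomposables, this gives
$${}_s\Gamma(A)_0=\bigcup_{i=1}^{r}\Omega^{\Integer}M_i\subseteq\bigcup_{i=1}^{r}\tau^{\Integer}\Omega^{\Integer}M_i\subseteq{}_s\Gamma(A)_0,$$
so ${}_s\Gamma(A)_0$, and therefore $\Gamma(A)$, has only finitely many $\tau$-orbits, and Lemma~\ref{Liu} yields that $A$ is representation-finite.

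The step I expect to be the crux is the middle one: checking that $\nu$ really does descend to an auto-equivalence of the stable category commuting with $\Omega$, so that it permutes the finitely many $\Omega$-orbits and the hypothesis $\nu^n M_i\in\Omega^{\Integer}M_i$ needed to invoke Lemma~\ref{lem5} is genuinely available. Once that is in place, the remainder is a formal assembly of Proposition~\ref{keyprp}, Lemma~\ref{lem5} and Lemma~\ref{Liu}.
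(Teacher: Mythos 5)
Your proposal is correct and follows essentially the same route as the paper: the same case split on the existence of a $\tau$-periodic module, the same use of Proposition~\ref{keyprp} in one case and of Lemma~\ref{lem5} plus Lemma~\ref{Liu} in the other. The only difference is that you spell out why $\nu^{n}M_i\in\Omega^{\Integer}M_i$ holds (via $\nu$ permuting the finitely many $\Omega$-orbits), a point the paper simply asserts.
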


\begin{proof}
Let $M_1,\cdots,M_l$ be a complete list of representatives of $\Omega$-orbits in ${}_s \Gamma(A)_0$. 
Note that for each $i \in \{1,\cdots,l\}$ 
there exists a positive integer $n_i$ such that $\nu^{n_i} M_i \in \Omega^{\Integer} M_i$. 

If there exists an integer $s \in \{1,\cdots,l\}$ 
such that $M_s$ is $\tau$-periodic,  
then by Proposition~\ref{keyprp}, $A$ is representation-finite. 

Then we can assume that each $M_i$ is not $\tau$-periodic.  
By Lemma~\ref{lem5}, $\Gamma(A)$ admits only finitely many $\tau$-orbits. 
Thus $A$ is representation-finite by Lemma~\ref{Liu}, as desired.   
\end{proof}

By Lemma \ref{lem1}, we obtain the following result.  

\begin{corenum} \label{cor1}
If the stable dimension of $A$ is zero, then $A$ is representation-finite. 
\end{corenum}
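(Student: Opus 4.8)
The plan is to deduce Corollary~\ref{cor1} directly from Theorem~\ref{Y1} together with Lemma~\ref{lem1}, modulo the minor bookkeeping needed to reconcile the hypotheses. Note first that $\stab.\dim A=0$ presupposes that $A\blank\umod$ is nonzero, hence that $A$ is non-semisimple; if $A$ were semisimple there would be nothing to prove anyway, since semisimple algebras are trivially representation-finite. So I may assume throughout that $A$ is non-semisimple, which is the standing hypothesis under which Theorem~\ref{Y1} is stated.

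The second issue is the connectedness assumption in Theorem~\ref{Y1}. A general selfinjective algebra $A$ decomposes as a finite product $A\iso A_1\times\cdots\times A_r$ of connected selfinjective algebras, and $A\blank\umod\iso\prod_i A_i\blank\umod$ as triangulated categories. The stable dimension of a finite product is the maximum of the stable dimensions of the factors, so $\stab.\dim A=0$ forces $\stab.\dim A_i=0$ for every $i$; moreover each $A_i$ is automatically non-semisimple whenever its stable category is nonzero, and a factor with zero stable category is semisimple and therefore representation-finite. Hence each $A_i$ satisfies the hypotheses of Theorem~\ref{Y1} (or is semisimple), so each $A_i$ is representation-finite, and therefore $A$, being a finite product of representation-finite algebras, is representation-finite.

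With those reductions in place the argument is immediate: assume $\stab.\dim A=0$. By Lemma~\ref{lem1}, $\stab.\dim A=0$ is equivalent to the statement that ${}_s\Gamma(A)_0$ admits only finitely many $\Omega$-orbits. Passing to each connected factor $A_i$ as above, ${}_s\Gamma(A_i)_0$ likewise admits only finitely many $\Omega$-orbits, so Theorem~\ref{Y1} applies and yields that $A_i$ is representation-finite for each $i$. Consequently $A$ is representation-finite.

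I expect the only real point requiring care is the passage to connected components, i.e.\ checking that "finitely many $\Omega$-orbits" and "representation-finite" both behave well under finite products of algebras; both facts are routine once one observes that $\Gamma(A)$ is the disjoint union of the $\Gamma(A_i)$ and that $A\blank\umod$ splits as the product of the $A_i\blank\umod$. There is no genuine obstacle here — the corollary is essentially a restatement of the theorem via Lemma~\ref{lem1} — which is presumably why the published proof is a single sentence.
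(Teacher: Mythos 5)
Your proposal is correct and follows exactly the paper's route: Lemma~\ref{lem1} converts $\stab.\dim A=0$ into the finiteness of the number of $\Omega$-orbits on ${}_s\Gamma(A)_0$, and Theorem~\ref{Y1} then gives representation-finiteness. The extra reductions you perform (non-semisimplicity and passage to connected factors) are harmless but unnecessary here, since the paper's standing convention in Section~3 is that $A$ is a non-semisimple selfinjective \emph{connected} algebra.
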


\begin{proof}
Suppose that $\stab.\dim A=0$. 
Then by Lemma \ref{lem1}, ${}_s \Gamma(A)_0$ admits only finitely many $\Omega$-orbits,   
and thus $A$ is representation-finite by Theorem~\ref{Y1}.    
\end{proof}

Furthermore, we obtain the following result. 

\begin{corenum} \label{cor2}
If $\rep.\dim A=3$, then $\stab.\dim A=1$. 
\end{corenum}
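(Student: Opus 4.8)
The plan is to combine Rouquier's inequality (Proposition~\ref{Rou1}) with Theorem~\ref{Au1} and our main theorem (Theorem~\ref{Y1}). The statement of Corollary~\ref{cor2} presupposes that $A$ is a non-semisimple selfinjective connected algebra, as stipulated at the start of Section~3, so the hypotheses of Proposition~\ref{Rou1} and Theorem~\ref{Y1} are available. First I would apply Proposition~\ref{Rou1} to get $\stab.\dim A \leq \rep.\dim A - 2 = 1$. So it remains to rule out $\stab.\dim A = 0$.

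Suppose for contradiction that $\stab.\dim A = 0$. Then by Corollary~\ref{cor1} (equivalently, by Lemma~\ref{lem1} together with Theorem~\ref{Y1}), $A$ is representation-finite. By Auslander's Theorem~\ref{Au1}, a representation-finite artin algebra has representation dimension at most $2$. But this contradicts the hypothesis $\rep.\dim A = 3$. Hence $\stab.\dim A \neq 0$, and combined with $\stab.\dim A \leq 1$ we conclude $\stab.\dim A = 1$.

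I do not expect any serious obstacle here: the corollary is a formal consequence of results already assembled in the excerpt. The only point requiring a little care is making sure the standing assumptions of Section~3 (non-semisimple, selfinjective, connected) are in force so that both Proposition~\ref{Rou1} and Corollary~\ref{cor1}/Theorem~\ref{Y1} apply; in particular, $\rep.\dim A = 3 > 1$ already forces $A$ to be non-semisimple, which is consistent. If one wished to avoid invoking the contradiction, one could instead argue directly: $\stab.\dim A = 0$ would give, via Corollary~\ref{cor1} and Theorem~\ref{Au1}, that $\rep.\dim A \leq 2$, so $\rep.\dim A = 3$ forces $\stab.\dim A \geq 1$; together with the upper bound $\stab.\dim A \leq \rep.\dim A - 2 = 1$ from Proposition~\ref{Rou1}, this yields $\stab.\dim A = 1$.
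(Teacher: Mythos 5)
Your proof is correct and follows essentially the same route as the paper: Proposition~\ref{Rou1} gives the upper bound $\stab.\dim A\leq 1$, and the combination of Corollary~\ref{cor1} with Theorem~\ref{Au1} rules out $\stab.\dim A=0$ (the paper phrases this via the contrapositive rather than an explicit contradiction, but the content is identical). Your remark that the standing assumptions of Section~3 must be in force is a fair point of care, but introduces nothing beyond the paper's own argument.
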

\begin{proof}
If $\rep.\dim A=3$, then $\stab.\dim A\leq 1$ and $A$ is not representation-finite by Proposition \ref{Rou1} and Theorem \ref{Au1}.
Then by Corollary~\ref{cor1}, $\stab.\dim A=1$. 
\end{proof}

It has been conjectured whether the following holds.

\begin{conjecture} \label{conj1}
Any artin algebra of tame representation type has representation dimension at most $3$.
\end{conjecture}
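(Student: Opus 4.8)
The plan is to attack the conjecture by exhibiting, for each tame algebra $A$, an explicit generator-cogenerator $M$ for which $\gl.\dim\End_A(M)\leq 3$; by the definition of representation dimension this is exactly what must be produced. Equivalently, using Auslander's homological reformulation, I would seek $M$ such that every indecomposable $A$-module $X$ fits into a short exact sequence $0\ra M_1\ra M_0\ra X\ra 0$ with $M_0,M_1\in\add M$ which remains exact after applying $\Hom_A(M,\blank)$. The first reduction is to pass to a basic connected algebra and, using standard invariance and covering-theoretic reductions, to bring $A$ into a normal form---in the selfinjective case, to the tame selfinjective algebras whose module categories are completely described by the tame classification. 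Representation-finite tame algebras satisfy the bound already by Theorem~\ref{Au1}, so all content lies in the infinite-representation-type case.

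Second, I would exploit this explicit description of the indecomposables. For a tame algebra of infinite type the stable Auslander-Reiten quiver typically contains tubes (the $\tau$-periodic part) alongside components of Euclidean or tubular type. On the $\tau$-periodic part Lemma~\ref{lem3} already guarantees that the syzygies have bounded dimension, so the modules occurring as kernels $M_1$ in the required sequences are drawn from a finite list; I would take $M$ to contain one representative from each such class, all projective-injectives, and a suitable finite set of modules from the remaining components. The problem then reduces to verifying that the chosen $M$ yields $\add M$-approximation sequences of length one for every $X$.

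The main obstacle is precisely this last verification, and I do not expect it to go through by a single uniform argument. Controlling the homological length at $1$ for the infinitely many modules lying in tubes requires understanding the morphisms among tube modules finely enough to produce $\Hom_A(M,\blank)$-exact resolutions, and there is no general mechanism forcing this bound. The deeper difficulty is structural: the only link in this paper between $\rep.\dim$ and the stable category is Rouquier's inequality $\rep.\dim A\geq\stab.\dim A+2$ of Proposition~\ref{Rou1}, which runs in the wrong direction for an upper bound. Even if one could show $\stab.\dim A\leq 1$ for every tame selfinjective $A$---a plausible companion to the equivalence of Lemma~\ref{lem1}---no converse inequality $\rep.\dim A\leq\stab.\dim A+2$ is known, so the stable-dimension machinery cannot by itself close the gap. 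I therefore expect the essential new input to be a direct, family-by-family construction of Auslander generators, with the tame classification supplying the combinatorial data, rather than a single conceptual reduction to the results of this paper.
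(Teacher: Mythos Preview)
The statement you are attempting is Conjecture~\ref{conj1}, and the paper does \emph{not} prove it: it is stated explicitly as an open conjecture, with the remark that it is known only for certain special classes (e.g.\ domestic selfinjective algebras socle equivalent to a weakly symmetric algebra of Euclidean type, via \cite{BHS}). There is therefore no proof in the paper against which to compare your proposal.

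Your outline is a reasonable description of how one would \emph{try} to attack the conjecture---build an explicit Auslander generator $M$ and verify $\gl.\dim\End_A(M)\leq 3$ family by family using the tame classification---and you correctly diagnose the obstruction: Proposition~\ref{Rou1} only gives $\rep.\dim A\geq\stab.\dim A+2$, so nothing in this paper's machinery yields an \emph{upper} bound on $\rep.\dim$. Your invocation of Lemma~\ref{lem3} for the $\tau$-periodic part is not quite apt, since that lemma bounds dimensions of syzygies rather than producing approximation sequences, but this is a minor point in what is already an acknowledged non-proof. The honest conclusion you reach---that a direct case-by-case construction is the expected route and that the paper's results do not close the gap---matches the paper's own stance: Conjecture~\ref{conj1} remains open here, and Conjecture~\ref{conj2} is introduced precisely as a weaker companion statement suggested by Corollary~\ref{cor2}.
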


Moreover, we have a new conjecture by Corollary~\ref{cor2}.

\begin{conjecture} \label{conj2}
Any $($non-semisimple$)$ selfinjective $k$-algebra of tame representation type has stable dimension at most $1$. 
\end{conjecture}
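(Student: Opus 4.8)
The plan is to derive Conjecture~\ref{conj2} from the representation-dimension bound predicted by Conjecture~\ref{conj1} together with Corollary~\ref{cor2}, after two routine reductions. Throughout, let $A$ be a non-semisimple selfinjective finite-dimensional $k$-algebra of tame representation type. The first step is to reduce to the connected case. Writing $A\iso A_1\times\cdots\times A_r$ as a product of connected selfinjective blocks, one has $A\blank\umod\simeq A_1\blank\umod\times\cdots\times A_r\blank\umod$ as triangulated categories, and the dimension of a product of triangulated categories is the maximum of the dimensions of the factors. Indeed, a generator of the product in $d+1$ steps projects to a generator of each factor in $d+1$ steps, while conversely, if $M_j$ generates $A_j\blank\umod$ in $d_j+1$ steps, then the object $(M_1,\dots,M_r)$, whose summands $(0,\dots,M_j,\dots,0)$ each lie in $\langle(M_1,\dots,M_r)\rangle$, generates the product in $\max_j d_j+1$ steps, using that each $\langle\blank\rangle_n$ is closed under finite direct sums. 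Hence $\stab.\dim A=\max_j\stab.\dim A_j$, and it suffices to bound each $\stab.\dim A_j$ by $1$. Since $A$ is tame, no block $A_j$ is wild, so each $A_j$ is either representation-finite or of tame representation type.

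The second step is to dispose of the blocks that are not representation-infinite tame. If $A_j$ is semisimple, then $A_j\blank\umod=0$ and there is nothing to prove. If $A_j$ is non-semisimple and representation-finite, then $\rep.\dim A_j\leq 2$ by Theorem~\ref{Au1}, whence $\stab.\dim A_j\leq 0$ by the inequality $\rep.\dim A_j\geq\stab.\dim A_j+2$ of Proposition~\ref{Rou1}; in either case $\stab.\dim A_j\leq 1$. For a block $A_j$ that is representation-infinite of tame type, Theorem~\ref{Au1} gives $\rep.\dim A_j\geq 3$, while Conjecture~\ref{conj1} gives $\rep.\dim A_j\leq 3$, so $\rep.\dim A_j=3$; Corollary~\ref{cor2} then yields $\stab.\dim A_j=1$. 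Combining the cases, $\stab.\dim A_j\leq 1$ for every $j$, and therefore $\stab.\dim A\leq 1$.

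The main obstacle is that this argument is only conditional: it rests entirely on Conjecture~\ref{conj1}, which is itself open, and in fact Conjecture~\ref{conj2} is precisely the shadow that Conjecture~\ref{conj1} casts on selfinjective tame algebras through Corollary~\ref{cor2}. An unconditional proof would require establishing $\rep.\dim A_j\leq 3$ for tame selfinjective algebras directly, or else bypassing the representation dimension by bounding the stable dimension by hand. The latter route would proceed through the classification of tame, domestic and polynomial-growth, selfinjective $k$-algebras and an explicit analysis of the shapes of the components of ${}_s\Gamma(A)$ together with the $\Omega$-orbits therein, exhibiting in each case a single module $M$ with $A\blank\umod=\langle M\rangle\diamond\langle M\rangle$. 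I expect the delicate point to be the non-periodic components of Euclidean type $\Integer A_\infty^\infty$, where the required two-step generation must be arranged uniformly across the classification rather than case by case.
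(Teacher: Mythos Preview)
The statement you are addressing is labelled a \emph{Conjecture} in the paper and is not proved there; the paper's only justification is the sentence ``we have a new conjecture by Corollary~\ref{cor2}''. Your conditional derivation---that Conjecture~\ref{conj1} together with Rouquier's inequality forces $\stab.\dim A\leq 1$ for tame selfinjective $A$---is exactly the reasoning behind the paper's formulation of Conjecture~\ref{conj2}, and your explicit acknowledgement that the argument rests entirely on the open Conjecture~\ref{conj1} is the correct conclusion. There is nothing further to compare: the paper does not claim a proof, and neither do you.

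Two minor remarks on the write-up. First, the block decomposition and the case split via Corollary~\ref{cor2} are more elaborate than necessary: Proposition~\ref{Rou1} is stated for arbitrary (not necessarily connected) non-semisimple selfinjective algebras, so once Conjecture~\ref{conj1} yields $\rep.\dim A\leq 3$, the bound $\stab.\dim A\leq \rep.\dim A-2\leq 1$ follows in one line without reducing to blocks and without invoking Corollary~\ref{cor2}. Second, your closing paragraph sketching a possible unconditional attack via the classification of tame selfinjective algebras and explicit two-step generation is speculative and goes well beyond anything the paper attempts; the paper merely notes that Conjecture~\ref{conj1} is known for certain domestic classes, whence Conjecture~\ref{conj2} holds for those via Proposition~\ref{Rou1}.
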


Conjecture~\ref{conj1} is true for some classes of tame algebras, 
such as domestic selfinjective algebras socle equivalent to a weakly symmetric algebra of Euclidean type (see Bocian-Holm-Skowro\'nski \cite{BHS}).
Thus, these algebras have stable dimension at most $1$ by Proposition~\ref{Rou1}.  

Note that in general the converse of Conjecture~\ref{conj1} does not hold. 
Any wild hereditary artin algebra is a counter example.

Similar to Conjecture~\ref{conj1}, the converse of Conjecture~\ref{conj2} does not hold in general: 
for instance, any wild selfinjective finite-dimensional algebra over a field with radical cube zero has stable dimension at most $1$ (see Proposition~\ref{Rou1}).

\section*{Acknowledgments}
The author would like to thank Takuma Aihara for suggesting him this problem. 
Also, 
the author would like to express his sincere gratitude to Osamu Iyama for helpful discussions and comments, 
and Hideto Asashiba for helpful advice.

\end{document}